\renewcommand{\le}{\varleq}
\renewcommand{\ge}{\vargeq}
\newcommand\mL{L\kern-0.08cm\char39}
\newcommand{\myforall}{\text{ for all }}
\newcommand{\mywith}{\text{ with }}
\newcommand{\myand}{\text{ and }}
\newcommand{\myif}{\text{ if }}
\newcommand{\mythen}{\text{ then }}
\newcommand{\seb}{\{\,}
\newcommand{\sen}{\,\}}
\newcommand{\getsby}[1]{\xleftarrow{#1}}
\newcommand{\tesgh}{edge-surjective graph homomorphism}
\newcommand{\pdirectional}{\raise0.05em\hbox{$+$}directional}
\newcommand{\pdirectionality}{\raise0.05em\hbox{$+$}directionality}
\newcommand{\pdirectionalitys}{\raise0.05em\hbox{$+$}directionality }
\newcommand{\pdirectionals}{\raise0.05em\hbox{$+$}directional }
\newcommand{\mdirectional}{\raise0.05em\hbox{$-$}directional}
\newcommand{\mdirectionality}{\raise0.05em\hbox{$-$}directionality}
\newcommand{\mdirectionalitys}{\raise0.05em\hbox{$-$}directionality }
\newcommand{\mdirectionals}{\raise0.05em\hbox{$-$}directional }
\newcommand{\bidirectional}{bidirectional}
\newcommand{\Z}{\mathbb{Z}}
\newcommand{\Nonne}{\mathbb{N}}
\newcommand{\Posint}{\mathbb{N}^+}
\newcommand{\bi}{\in \Z}
\newcommand{\beposint}{\in \Posint}
\newcommand{\bpi}{\ge 1} 
\newcommand{\benonne}{\in \Nonne}
\newcommand{\bni}{\ge 0} 
\newcommand{\Gcal}{\mathcal{G}}
\newcommand{\sC}{\mathscr{C}}
\newcommand{\kuu}{\emptyset}
\newcommand{\nekuu}{\neq \kuu}
\newcommand{\fai}{\varphi}
\newcommand{\barx}{\bar{x}}
\newcommand{\bary}{\bar{y}}
\newcommand{\barC}{\bar{C}}
\newcommand{\barX}{\bar{X}}
\newcommand{\hx}{\hat{x}}
\newcommand{\hy}{\hat{y}}
\newcommand{\hX}{\hat{X}}
\newcommand{\ddC}{\ddot{C}}
\newcommand{\ddX}{\ddot{X}}
\newcommand{\centb}{\begin{center}}
\newcommand{\centn}{\end{center}}
\newcommand{\enumb}{\begin{enumerate}}
\newcommand{\enumn}{\end{enumerate}}
\newcommand{\itemb}{\begin{itemize}}
\newcommand{\itemn}{\end{itemize}}
\numberwithin{equation}{section}
\setlist[enumerate,1]{label=(\alph*),ref=(\alph*)}
\setlist[enumerate,2]{label=(\arabic*),ref=(\alph{enumi}-\arabic{enumii})}
\setlist[enumerate,3]{label=(\Alph*),ref=(\roman{enumi}-\alph{enumii}-\Alph*)}
\setlist[enumerate,4]{label=(\arabic*),ref=(\roman{enumi}-\alph{enumii}-\Alph{enumiii}-\arabic*)}
\newlist{alphaenum}{enumerate}{1}
\setlist[alphaenum,1]{label=($\alpha$-\arabic*),ref=($\alpha$-\arabic*)}
\newlist{betaenum}{enumerate}{1}
\setlist[betaenum,1]{label=($\beta$-\arabic*),ref=($\beta$-\arabic*)}
\newlist{gammaenum}{enumerate}{1}
\setlist[gammaenum,1]{label=($\gamma$-\arabic*),ref=($\gamma$-\arabic*)}
\newtheorem{thm}{Theorem}[section]
\newtheorem{prop}[thm]{Proposition}
\newtheorem{cor}[thm]{Corollary}
\theoremstyle{definition}
\newtheorem{defn}[thm]{Definition}
\theoremstyle{remark}
\newtheorem{nota}[thm]{Notation}
\newtheorem{rem}[thm]{Remark}
\crefname{sec}{\S}{\S\S}
\crefname{mainthm}{Theorem}{Theorems}
\crefname{thm}{Theorem}{Theorems}
\crefname{lem}{Lemma}{Lemmas}
\crefname{prop}{Proposition}{Propositions}
\crefname{cor}{Corollary}{Corollaries}
\crefname{defn}{Definition}{Definitions}
\crefname{conj}{Conjecture}{Conjectures}
\crefname{example}{Example}{Examples}
\crefname{nota}{Notation}{Notations}
\crefname{rem}{Remark}{Remarks}
\crefname{note}{Note}{Notes}
\crefname{case}{Case}{Cases}
\crefname{figure}{Figure}{Figures}
\crefname{section}{\S}{\S\S}
\crefname{enumi}{}{}
\crefname{enumii}{}{}
\crefname{equation}{}{}
\newcommand{\Vp}{V \setminus V_0}
\newcommand{\covrepa}[2]{#1_0 \getsby{#2_1} #1_1 \getsby{#2_2} #1_2 \getsby{#2_3} \dotsb}
\begin{document}

\title[Topological rank does not increase by natural extension]
{Topological rank does not increase by natural extension of Cantor minimals}

\author{TAKASHI SHIMOMURA}

\address{Nagoya University of Economics, Uchikubo 61-1, Inuyama 484-8504, Japan}
\curraddr{}
\email{tkshimo@nagoya-ku.ac.jp}
\thanks{}

\subjclass[2010]{Primary 37B05, 37B10, 54H20.}

\keywords{graph covering, zero-dimensional, Bratteli diagram, topological rank, minimal, subshift}

\date{\today}

\dedicatory{}

\commby{}

\begin{abstract}
Downarowicz and Maass (2008) have defined the topological rank for all
 Cantor minimal homeomorphisms.
On the other hand, Gambaudo and Martens (2006) have expressed all Cantor minimal
 continuous surjections as the inverse limits of certain graph coverings.
Using the aforementioned results, we previously extended the notion of
 topological rank to all Cantor minimal
 continuous surjections.
In this paper, we show that taking
 natural extensions of Cantor minimal continuous surjections does not increase their topological ranks.
Further, we apply the result to the minimal symbolic case.
\end{abstract}

\maketitle
\section{Introduction}
In \cite{DOWNAROWICZ_2008FiniteRankBratteliVershikDiagAreExpansive},
 Downarowicz and Maass presented a remarkable result, i.e.,
 a Cantor minimal system of finite topological rank $K > 1$ is expansive.
They used properly ordered Bratteli diagrams and adopted a noteworthy technique.
In \cite{BEZUGLYI_2009AperioSubstSysBraDiag},
 Bezuglyi, Kwiatkowski, and Medynets extended the result
 to non-minimal aperiodic homeomorphic cases.
In this paper, a {\it zero-dimensional system} implies a pair $(X,f)$
 of a compact zero-dimensional metrizable space $X$
 and a continuous surjective map $f : X \to X$.
A zero-dimensional system is a Cantor system if $X$ does not contain any
 isolated point.
In \cite{Shimomura_2014SpecialHomeoApproxCantorSys},
 we showed that every zero-dimensional system is expressed
 as an inverse limit of a sequence of covers of finite directed graphs.
In this paper, instead of the term ``sequence of graph covers,''
 we use the term
 ``graph covering'' or just ``covering'' for short.
In \cite{Gambaudo_2006AlgTopMinCantSets},
 Gambaudo and Martens had already represented
 general Cantor minimal continuous surjections
 by a type of graph covering.
In a previous paper \cite{SHIMOMURA_2015nonhomeomorphic}, we extended
 the definition of topological rank to Cantor minimal continuous surjections
 by applying the Gambaudo--Martens type of graph covering,
 and we showed that
 a Cantor minimal continuous surjection of finite topological
 rank $K > 1$ has a natural extension that is expansive.
We also showed that the two topological ranks are equal to each other
 in the case of homeomorphic Cantor minimal systems.
In addition, we presented some related results.
In this paper, we show that taking natural extensions
of Cantor minimal continuous surjections
 does not increase their topological ranks.
Further, we apply the result to the minimal symbolic case.
\section{Preliminaries}
Let $\Z$ denote the set of all integers;
 $\Nonne$, the set of all non-negative integers;
 and $\Posint$, the set of all positive integers.
In this section,
 to prepare graph coverings of
 the Gambaudo--Martens type,
 we repeat the construction of general graph coverings
 for general zero-dimensional systems originally given in
 \cite[\cref{sec:Bratteli}]{Shimomura_2014SpecialHomeoApproxCantorSys}.
For $m \ge n$, we denote $[n,m] := \seb n, n+1,\dotsc, m\sen$.
A pair $G = (V,E)$
 consisting of a finite set $V$ and a relation $E \subseteq V \times V$ on $V$
 can be considered as a directed graph with vertices $V$
 and an edge from $u$ to $v$ when $(u,v) \in E$.
Unlike the case of Bratteli diagrams, which are well known and defined in
 \cref{sec:Bratteli},
 multiple edges from a vertex $u$ to $v$ are not permitted.
Here, we note that
 the expression $(V,E)$ is also used to represent a Bratteli diagram.
If we write ``graph $G$,'' ``graph $G = (V,E)$,'' or
 ``surjective directed graph $G = (V,E)$,''
 we imply a finite directed graph.
When the expression $(V,E)$ represents a Bratteli diagram,
 we explicitly write ``Bratteli diagram $(V,E)$.''
%
%
\begin{nota}
In this paper,
 we assume that a finite directed graph $G$ is a surjective relation,
 i.e., for every vertex $v \in V$, there exist edges $(u_1,v),(v,u_2) \in E$.
\end{nota}
For directed graphs $G_i = (V_i,E_i)$ with $i = 1,2$,
 a map $\fai : V_1 \to V_2$ is said to be a {\it graph homomorphism}
 if for every edge $(u,v) \in E_1$,
 it follows that $(\fai(u),\fai(v)) \in E_2$.
In this case, we write $\fai : G_1 \to G_2$.
For a graph homomorphism $\fai : G_1 \to G_2$,
 we say that $\fai$ is {\it edge-surjective}
if $\fai(E_1) = E_2$.
%
%
%
%
%
Suppose that a graph homomorphism $\fai : G_1 \to G_2$
 satisfies the following condition:
\[(u,v),(u,v') \in E_1 \text{ implies that } \fai(v) = \fai(v').\]
In this case, $\fai$ is said to be {\it \pdirectional}.
Suppose that a graph homomorphism $\fai$ satisfies
 both of the following conditions:
\[(u,v),(u,v') \in E_1 \text{ implies that } \fai(v) = \fai(v') \myand \]
\[(u,v),(u',v) \in E_1 \text{ implies that } \fai(u) = \fai(u').\]
Then, $\fai$ is said to be {\it \bidirectional}.\\
%
%
\begin{defn}\label{defn:cover}
A graph homomorphism $\fai : G_1 \to G_2$ is called a \textit{cover}
 if it is a \pdirectional\ \tesgh.
\end{defn}
For a sequence $G_1 \getsby{\fai_2} G_2 \getsby{\fai_3} \dotsb$
 of graph homomorphisms
 and $m > n$, we write
 $\fai_{m,n} := \fai_{n+1} \circ \fai_{n+2} \circ \dotsb \circ \fai_{m}$.
Then, $\fai_{m,n}$ is a graph homomorphism.
If all ${\fai_i}$ $(i \beposint)$ are edge-surjective,
 then every $\fai_{m,n}$ is edge-surjective.
If all ${\fai_i}$ $(i \beposint)$ are covers, every $\fai_{m,n}$ is a cover.
%
%
Let $G_0 := \left(\seb v_0 \sen, \seb (v_0,v_0) \sen \right)$
 be a singleton graph.
For a sequence of graph covers
 $G_1 \getsby{\fai_2} G_2 \getsby{\fai_3} \dotsb$, we
 attach the singleton graph $G_0$ at the head.
We refer to a sequence of graph covers
 $G_0 \getsby{\fai_1} G_1 \getsby{\fai_2} G_2 \getsby{\fai_3} \dotsb$
 as a \textit{graph covering} or just a \textit{covering}.
%
%
%
%
Let us express the directed graphs as $G_i = (V_i,E_i)$ for $i \benonne$.
Define
\[V_{\Gcal} := \seb (x_0,x_1,x_2,\dotsc)
 \in \prod_{i = 0}^{\infty}V_i \mid x_{i} = \fai_{i+1}(x_{i+1})
 \text{ for all } i \benonne \sen \text{ and}\]
\[E_{\Gcal} := \seb (x,y) \in V_{\Gcal} \times V_{\Gcal} \mid
 (x_i,y_i) \in E_i \text{ for all } i \benonne\sen,\]
each equipped with the product topology. Further, each $V_i$ is equipped with the discrete topology.
\begin{nota}\label{nota:opensets-of-vertices}
Let $X= V_{\Gcal}$,
 and let us define a map $f:X \to X$ by $f(x) =y$
 if and only if $(x, y) \in E_\Gcal$.
For each $n \benonne$,
 the projection from $V_{\Gcal}$ to $V_n$ is denoted by $\fai_{\infty,n}$.
For $v \in V_n$, we denote a clopen set $U(v) := \fai_{\infty}^{-1}(v)$.
For a subset $V \subset V_n$,
 we denote a clopen set $U(V) := \bigcup_{v \in V}U(v)$.
\end{nota}
%
%
We can state the following:
\begin{thm}[
{\cite[Theorem 3.9 and Lemma 3.5]{Shimomura_2014SpecialHomeoApproxCantorSys}}
]\label{thm:0dim=covering}
Let $\Gcal$ be a covering
 $\covrepa{G}{\fai}$.
Let $X=V_\Gcal$ and let us define $f:X \to X$ as above.
Then,
 $f$ is a continuous surjective mapping and $(X,f)$ is
 a zero-dimensional system.
Conversely, every zero-dimensional system can be written in this manner.
Furthermore,
 if all $\fai_n$ are \bidirectional, then this zero-dimensional system is a
 homeomorphism and every compact zero-dimensional homeomorphism
 is written in this manner.
\end{thm}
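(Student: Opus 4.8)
The plan is to prove the two implications separately; the direction ``a covering yields a zero-dimensional system'' is a routine inverse-limit computation, while the converse depends on choosing the approximating partitions carefully. For the forward direction, first note that $X = V_\Gcal$ is a closed subspace of the product $\prod_{i \ge 0} V_i$ of finite discrete spaces, hence compact, metrizable, totally disconnected, and nonempty (an inverse limit of nonempty finite sets). To see that $f$ is a single-valued map, fix $x = (x_i)_i \in X$ and set $S_i := \seb v \in V_i \mid (x_i,v) \in E_i \sen \noemp$ (each $G_i$ is a surjective relation). Because $\fai_{i+1}$ is \pdirectional, all elements of $S_{i+1}$ have one and the same image under $\fai_{i+1}$, call it $y_i$; the homomorphism property gives $y_i \in S_i$, and since $S_{i+2} \noemp$ forces $y_{i+1} \in S_{i+1}$, the definition of $y_i$ yields $y_i = \fai_{i+1}(y_{i+1})$. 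Thus $y := (y_i)_i \in X$ with $(x,y) \in E_\Gcal$, and $y$ is unique because any $y'$ with $(x,y') \in E_\Gcal$ satisfies $y'_{i+1} \in S_{i+1}$, hence $y'_i = \fai_{i+1}(y'_{i+1}) = y_i$. Then $f$ is continuous because $E_\Gcal$ is closed in $X \times X$ and a single-valued map with closed graph into a compact Hausdorff space is continuous; and $f$ is surjective because, for a fixed $y \in X$, the finite sets $P_n := \seb (u_0,\dotsc,u_n) \mid u_i \in V_i,\ u_i = \fai_{i+1}(u_{i+1}),\ (u_i,y_i) \in E_i \sen$ are nonempty (start from any $u_n$ with $(u_n,y_n)\in E_n$ and descend using $y_{i-1} = \fai_i(y_i)$) and form an inverse system under truncation, so $\varprojlim_n P_n \neq \emptyset$ and any element of it is a preimage of $y$.

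For the converse, let $(X,f)$ be a zero-dimensional system. The crux is to build finite clopen partitions $\Pcal_0 = \seb X\sen, \Pcal_1, \Pcal_2, \dotsc$, each refining its predecessor, with $\mesh(\Pcal_n) \to 0$ and with the extra property that the $f$-image of every atom of $\Pcal_{n+1}$ lies inside a single atom of $\Pcal_n$. This is done inductively: given $\Pcal_n$, choose any finer finite clopen partition $\Qcal$ of small mesh (available since $X$ is compact, metrizable, and zero-dimensional) and set $\Pcal_{n+1} := \Qcal \vee f^{-1}(\Pcal_n)$; this is a clopen partition by continuity of $f$, refines both $\Qcal$ and $\Pcal_n$, and each of its atoms $C \cap f^{-1}(B)$ has $f(C \cap f^{-1}(B)) \subseteq B$. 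Now put $V_n := \Pcal_n$, let $\fai_{n+1} : V_{n+1} \to V_n$ send an atom to the atom of $\Pcal_n$ containing it, and declare $(P,Q) \in E_n$ iff $f(P) \cap Q \noemp$. Then $G_n := (V_n,E_n)$ is a surjective relation (because $f$ is onto), $\fai_{n+1}$ is a graph homomorphism and is edge-surjective, and the extra property makes $\fai_{n+1}$ \pdirectional, so $\Gcal = \covrepa{G}{\fai}$ is a covering. Finally, the map $\Theta$ sending $x \in X$ to the coherent sequence of atoms containing it is a continuous bijection $X \to V_\Gcal$ --- injective because $\mesh(\Pcal_n) \to 0$, surjective because a coherent sequence of atoms is a decreasing sequence of nonempty clopen sets whose intersection is a single point by compactness --- hence a homeomorphism, and $(\Theta(x),\Theta(f(x))) \in E_\Gcal$ for all $x$ shows that $\Theta$ conjugates $f$ with the map induced by $\Gcal$.

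For the homeomorphic statement: if every $\fai_n$ is \bidirectional, then for $(x,y),(x',y) \in E_\Gcal$ the \mdirectional\ half of the definition gives $\fai_{n+1}(x_{n+1}) = \fai_{n+1}(x'_{n+1})$ for all $n$, i.e.\ $x = x'$, so the induced map is injective, hence a homeomorphism of the compact space $V_\Gcal$. Conversely, when $f$ is a homeomorphism, $\seb f(A) \mid A \in \Pcal_n \sen$ is again a finite clopen partition, so one may take $\Pcal_{n+1} := \Qcal \vee f^{-1}(\Pcal_n) \vee \seb f(A) \mid A \in \Pcal_n \sen$; then each atom $Q$ of $\Pcal_{n+1}$ also has $f^{-1}(Q)$ inside a single atom of $\Pcal_n$, which makes every $\fai_{n+1}$ \mdirectional\ as well, hence \bidirectional. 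The one genuinely delicate step is this partition construction in the converse --- pushing a clopen partition forward along $f$ (and along $f^{-1}$ in the homeomorphic case) while still shrinking the mesh; everything else is routine inverse-limit and compactness bookkeeping.
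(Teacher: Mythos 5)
Your proof is correct, and it follows essentially the same route as the cited source \cite{Shimomura_2014SpecialHomeoApproxCantorSys} (the paper itself only quotes the result): represent the system by a refining sequence of finite clopen partitions whose atoms have $f$-images (and, in the homeomorphic case, $f^{-1}$-images) contained in single atoms of the previous partition, and read off the covering from the induced incidence relations. The only point worth flagging is cosmetic: well-definedness and continuity of $f$ in the forward direction, which you handle via \pdirectionality\ and the closed-graph argument, is exactly the content of the cited Lemma 3.5, so no gap remains.
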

%
%
We write $(V_{\Gcal},E_{\Gcal})$ as $G_{\infty}$.
%
%
%
Take a subsequence $n_0 = 0 < n_1 < n_2 < \dotsb$.
Then, we can get essentially the same covering
\[ G_0 \getsby{\fai_{n_1,0}} G_{n_1} \getsby{\fai_{n_2,n_1}} G_{n_2} \dotsb.\]
It is evident that the new covering produces
 a naturally topologically conjugate zero-dimensional  system.
Following the terminology in the theory of Bratteli--Vershik systems,
 we refer to this procedure as
 {\it telescoping}.
%
%
%
%
%
%
%
%
%
\begin{nota}
Let $G = (V,E)$ be a surjective directed graph.
A sequence of vertices $(v_0,v_1,\dotsc,v_l)$ of $G$ is said to be
 a \textit{walk} of length $l$ if $(v_i, v_{i+1}) \in E$ for all $0 \le i < l$.
We denote $l(w) := l$.
We say that a walk $w = (v_0,v_1,\dotsc,v_l)$ is a {\it path}\/
 if $v_i$ $(0 \le i \le l)$ are mutually distinct.
A walk $c = (v_0,v_1,\dotsc,v_l)$ is said to be a {\it cycle}\/ of period $l$
 if $v_0 = v_l$,
 and a cycle $c = (v_0,v_1,\dotsc,v_l)$
 is a \textit{circuit} of period $l$
 if $v_i$ $(0 \le i < l)$ are mutually distinct.
Further, a circuit $c$ and a path $p$ are considered to be subgraphs of $G$
 with period $l(c)$ and length $l(p)$, respectively.
Let $\sC(G)$ be the set of all circuits of $G$.
For a walk $w = (v_0,v_1,\dotsc,v_l)$,
 we define $V(w) := \seb v_i \mid 0 \le i \le l \sen$
 and $E(w) := \seb (v_i,v_{i+1}) \mid 0 \le i < l \sen$.
For a subgraph $G'$ of $G$,
 we define $V(G')$ and $E(G')$ in the same manner;
 in particular, $V(G) = V$ and $E(G) = E$.
\end{nota}
Next, we introduce a proposition that describes a condition of minimality
 of the inverse limit of a graph covering.
\begin{prop}\label{prop:minimal}
Let $\covrepa{G}{\fai}$ be a covering.
Then, the resulting zero-dimensional system $G_{\infty}$ is minimal
 if and only if for all $n \ge 0$,
 there exists an $m > n$ such that every $c \in \sC(G_m)$ satisfies
 $V(\fai_{m,n}(c)) = V(G_n)$.
\end{prop}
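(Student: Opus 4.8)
The plan is to phrase minimality in terms of the clopen partitions $\{U(v)\mid v\in V_n\}$: for $x\in X$ the sequence $((f^kx)_n)_{k\ge0}$ is a walk in $G_n$ recording the forward orbit of $x$ read at level $n$, and $G_\infty$ is minimal exactly when, for every $n\ge0$ and every $v\in V_n$, each such walk meets $v$. The technical core, to be set up first, is a description of $(f^kx)_n$ through a single coordinate of $x$. Since every $\fai_{j+1}$ is $\pdirectional$, for $x\in X$ the vertex $(fx)_j$ is determined by $x_{j+1}$; let $g_j\colon V_{j+1}\to V_j$ send $w$ to the common value of $\fai_{j+1}$ on the out-neighbours of $w$ in $G_{j+1}$. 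Iterating, $(f^kx)_n=g_n\circ g_{n+1}\circ\dots\circ g_{n+k-1}(x_{n+k})$, and since $x_{n+k}=\fai_{m,\,n+k}(x_m)$ whenever $m\ge n+k$,
\[
 (f^kx)_n \;=\; g_n\circ\dots\circ g_{n+k-1}\bigl(\fai_{m,\,n+k}(x_m)\bigr).
\]

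The key claim is then: for $n+k\le m$, any $u_0\in V_m$, and any walk $u_0,u_1,\dots,u_k$ of length $k$ in $G_m$, one has $g_n\circ\dots\circ g_{n+k-1}\bigl(\fai_{m,\,n+k}(u_0)\bigr)=\fai_{m,n}(u_k)$; in particular the value is independent of the chosen lift of the walk. I would prove this by induction on $k$, the step peeling off one edge: because $\fai_{m,\,n+k}$ is a graph homomorphism, $\fai_{m,\,n+k}(u_1)$ is an out-neighbour of $\fai_{m,\,n+k}(u_0)$ in $G_{n+k}$, hence $g_{n+k-1}(\fai_{m,\,n+k}(u_0))=\fai_{n+k}(\fai_{m,\,n+k}(u_1))=\fai_{m,\,n+k-1}(u_1)$, and the inductive hypothesis is applied to the length-$(k-1)$ walk $u_1,\dots,u_k$. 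This is the one step demanding real care — the lift $u_0,\dots,u_k$ upstairs is far from unique, while its projected endpoint downstairs is rigid — and once it is in hand both implications are short.

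For the ``if'' direction, assume the condition and fix $x\in X$, $n\ge0$, $v\in V_n$; choose $m>n$ as in the statement. The walk $((f^kx)_m)_{k\ge0}$ in the finite graph $G_m$ repeats a vertex among its first $\abs{V_m}+1$ terms, hence contains a closed subwalk and therefore a circuit $c\in\sC(G_m)$ all of whose vertices appear among the $(f^kx)_m$. Since $V(\fai_{m,n}(c))=V(G_n)$, there is $w\in V(c)$ with $\fai_{m,n}(w)=v$; choosing $t$ with $(f^tx)_m=w$ gives $(f^tx)_n=\fai_{m,n}((f^tx)_m)=v$, i.e.\ $f^t(x)\in U(v)$. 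Thus every forward orbit is dense and $G_\infty$ is minimal.

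For the ``only if'' direction, argue by contraposition: suppose the condition fails at some $n$, so every $G_m$ with $m>n$ has a circuit whose $\fai_{m,n}$-image is a proper subset of $V(G_n)$. As $V_n$ is finite, there are a fixed $v\in V_n$ and infinitely many $m$ admitting a circuit $c_m$ in $G_m$ with $v\notin V(\fai_{m,n}(c_m))$; and since for $n<m'\le m$ the closed walk $\fai_{m,m'}(c_m)$ contains a circuit $c'$ with $\fai_{m',n}(V(c'))\subseteq\fai_{m,n}(V(c_m))$, such a circuit exists at \emph{every} level above $n$. Fix $T\ge1$, put $m:=n+T$, pick $u_0\in V(c_m)$ and any $x\in X$ with $x_m=u_0$ (possible since $\fai_{\infty,m}$ is onto). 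Running the length-$k$ walk around $c_m$, the key claim gives $(f^kx)_n=\fai_{m,n}(u_{k\bmod l(c_m)})\ne v$ for $0\le k\le T$, whence $\bigcap_{k=0}^{T}f^{-k}\bigl(U(V_n\setminus\{v\})\bigr)\ne\emptyset$. As these sets are closed, nested, and nonempty for every $T$, the set $Y:=\bigcap_{k\ge0}f^{-k}\bigl(U(V_n\setminus\{v\})\bigr)$ is a nonempty closed set with $f(Y)\subseteq Y$ and $Y\cap U(v)=\emptyset$; thus $Y$ is a proper nonempty closed forward-invariant set and $G_\infty$ is not minimal.
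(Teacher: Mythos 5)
Your proof is correct, but it is a genuinely different route from the paper's, because the paper does not prove the proposition at all: it obtains it as an immediate consequence of the equivalences (a), (d), (e) of Theorem 3.5 in \cite{Shimomura_2014ergodic}, where minimality of a graph-covering inverse limit is characterized in essentially this circuit-covering form. Your argument is self-contained. Its key lemma --- that \pdirectionality\ makes $(f^kx)_n$ computable from the single coordinate $x_m$ for any $m\ge n+k$, and equal to $\fai_{m,n}(u_k)$ for \emph{any} length-$k$ walk $u_0,\dotsc,u_k$ in $G_m$ with $u_0=x_m$ --- is exactly the right tool, and your induction for it is sound. With it both directions go through as you describe: a segment of length $\hash V_m$ of the level-$m$ itinerary must wrap around some circuit, whose projection covers $V(G_n)$ by hypothesis; conversely a circuit at level $n+T$ whose projection misses $v$ produces a point whose first $T$ iterates avoid $U(v)$, and compactness yields a nonempty closed forward-invariant set disjoint from $U(v)$. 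Two small things worth making explicit in a write-up: first, that $\{U(v)\mid v\in V_n,\ n\ge 0\}$ is a basis of the topology (fixing coordinate $n$ determines all lower coordinates), so meeting every $U(v)$ really does give a dense forward orbit; second, if minimality of a non-invertible system is taken to mean that no proper nonempty closed $Y$ satisfies $f(Y)=Y$ rather than $f(Y)\subseteq Y$, replace your $Y$ by $\bigcap_{k\ge 0}f^k(Y)$. What your approach buys is independence from \cite{Shimomura_2014ergodic}; what the paper's citation buys is one line.
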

\begin{proof}
From \cite[(a),(d),(e) of Theorem 3.5]{Shimomura_2014ergodic},
 the conclusion is obvious.
\end{proof}
\section{Bratteli--Vershik systems}\label{sec:Bratteli}
\begin{defn}
A \textit{Bratteli diagram} is an infinite directed graph $(V,E)$,
 where $V$ is the vertex
set and $E$ is the edge set.
These sets are partitioned into non-empty disjoint finite sets
$V = V_0 \cup V_1 \cup V_2 \cup \dotsb$ and $E = E_1 \cup E_2 \cup \dotsb$,
 where $V_0 = \seb v_0 \sen$ is a one-point set.
Each $E_n$ is a set of edges from $V_{n-1}$ to $V_n$.
Therefore, there exist two maps $r,s : E \to V$ such that $r:E_n \to V_n$
 and $s : E_n \to V_{n-1}$ for $n \bpi$,
 i.e., the range map and the source map, respectively.
Moreover, $s^{-1}(v) \nekuu$ for all $v \in V$ and
$r^{-1}(v) \nekuu$ for all $v \in V \setminus V_0$.
We say that $u \in V_{n-1}$ is connected to $v \in V_{n}$ if there
 exists an edge $e \in E_n$ such that $s(e) = u$ and $r(e) = v$.
Unlike the case of graph coverings,
 multiple edges between $u$ and $v$ are permitted.
The {\it rank $K$} of a Bratteli diagram is defined as
 $K := \liminf_{n \to \infty}\hash V_n$,
 where $\hash V_n$ is the number of elements in $V_n$.
\end{defn}
%
%
Let $(V,E)$ be a Bratteli diagram and $m < n$ be non-negative integers.
We define
\[E_{m,n} :=
 \seb p \mid
 p \text{ is a path from a } u \in V_m \text{ to a } v \in V_n \sen.
\]
Then, we can construct a new Bratteli diagram $(V',E')$ as follows:
\[ V' := V_0 \cup V_1 \cup \dotsb \cup V_m \cup V_n \cup V_{n+1} \cup \dotsb \]
\[ E' :=
 E_1 \cup E_2 \cup \dotsb \cup E_m \cup E_{m,n} \cup E_{n+1} \cup \dotsb. \]
The source map and the range map are also defined naturally.
This procedure is called \textit{telescoping}.
\begin{defn}
A Bratteli diagram is called \textit{simple}
 if, after (at most countably many) telescopings,
 we get that for all $n \ge 0$,
 all pairs of vertices $u \in V_n$ and $v \in V_{n+1}$ are joined
 by at least one edge.
\end{defn}

\begin{defn}
Let $(V,E)$ be a Bratteli diagram such that
$V = V_0 \cup V_1 \cup V_2 \cup \dotsb$ and $E = E_1 \cup E_2 \cup \dotsb$
 are the partitions,
 where $V_0 = \seb v_0 \sen$ is a one-point set.
Let $r,s : E \to V$ be the range map and the source map, respectively.
We say that $(V,E,\le)$ is an \textit{ordered} Bratteli diagram if
 the partial order $\le$ is defined on $E$ such that
 $e, e' \in E$ are comparable if and only if $r(e) = r(e')$.
 In other words, we have a linear order on each set $r^{-1}(v)$ with $v \in \Vp$.
The edges $r^{-1}(v)$ are numbered from $1$ to $\hash(r^{-1}(v))$.
\end{defn}
Let $n > 0$
 and $e = (e_n,e_{n+1},e_{n+2},\dotsc), e'=(e'_n,e'_{n+1},e'_{n+2},\dotsc)$
 be cofinal paths from the vertices of $V_{n-1}$, which might be different.
We obtain the lexicographic order $e < e'$ as follows:
\[\myif k \ge n \text{ is the largest number such that } e_k \ne e'_k,
 \mythen e_k < e'_k.\]
\begin{defn}
Let $(V,E,\le)$ be an ordered Bratteli diagram.
Let $E_{\max}$ and $E_{\min}$ denote the sets of maximal and minimal
edges, respectively.
A path is maximal (resp. minimal) if all the edges constituting the
path are elements of $E_{\max}$ (resp. $E_{\min}$).
\end{defn}

\begin{defn}
An ordered Bratteli diagram is properly ordered if it is
simple and if it has a unique maximal path and a unique minimal path,
 denoted respectively by
$x_{\max}$ and $x_{\min}$.
\end{defn}

\begin{defn}[Vershik map]
Let $(V,E,\le)$ be a properly ordered Bratteli diagram.
Let
\[E_{0,\infty} := \seb (e_1,e_2,\dotsc) \mid r(e_i) = s(e_{i+1})
 \myforall i \ge 1 \sen,\]
with the subspace topology of the product space $\prod_{i = 1}^{\infty}E_i$.
We can define a \textit{Vershik map}
 $\phi : E_{0,\infty} \to E_{0,\infty}$ as follows:

\noindent If $e = (e_1,e_2,\dotsc) \ne x_{\max}$,
 then there exists the least $n \ge 1$ such that
 $e_n$ is not maximal in $r^{-1}(r(e_n))$.
Then, we can select the least $f_n > e_n$ in $r^{-1}(r(e_n))$.
Let $v_{n-1} = s(f_n)$.
Then, it is easy to obtain the unique least path $(f_1,f_2,\dotsc,f_{n-1})$
 from $v_0$ to $v_{n-1}$.
We define
\[\phi(e) := (f_1,f_2,\dotsc,f_{n-1},f_n,e_{n+1},e_{n+2},\dotsc).\]
Further, we define $\phi(x_{\max}) = x_{\min}$.
The map $\phi : E_{0,\infty} \to E_{0,\infty}$ is called
 the \textit{Vershik map}.
\end{defn}
On the basis of a previously introduced theorem \cite[Theorem 4.7]{HERMAN_1992OrdBratteliDiagDimGroupTopDyn},
 we can find a correspondence
 that a properly ordered Bratteli diagram brings about the Vershik map
 that is a minimal homeomorphic zero-dimensional system.
 Conversely, a minimal homeomorphic zero-dimensional system
 is represented as the Vershik map of a properly ordered Bratteli diagram.
In \cite{DOWNAROWICZ_2008FiniteRankBratteliVershikDiagAreExpansive},
 Downarowicz and Maass introduced the topological rank for
 Cantor minimal homeomorphisms.
\begin{defn}
Let $(X,f)$ be a Cantor minimal homeomorphism.
Then, the topological rank of $(X,f)$ is $1 \le K \le \infty$
 if it has a Bratteli--Vershik representation with a Bratteli diagram
 of rank $K$,
 and $K$ is the minimum of such numbers.
\end{defn}%
\section{Covering of Gambaudo--Martens Type}\label{sec:gambaudo-martens}
%
%
In this section, we introduce a covering of the Gambaudo--Martens type
 and define the topological rank for all Cantor minimal continuous
 surjections.
Then, we prepare the proof of
 our main result.
In \cite[Theorem 2.5]{Gambaudo_2006AlgTopMinCantSets},
 Gambaudo and Martens showed that every Cantor minimal system is
an inverse limit of a special type of graph covering.
In our context, their construction of a graph covering is as follows.
 Let $\covrepa{G}{\fai}$ be a graph covering.
As usual, we assume that $G_0$ is a singleton graph
 $(\seb v_0 \sen,\seb (v_0,v_0) \sen)$.
We shall construct graphs $G_n$ with an $n \ge 1$ such that
 there exist a unique vertex $v_{n,0}$
 and a finite number of circuits $c_{n,i}~(1 \le i \le r_n)$
 that start and end at $v_{n,0}$.
 Roughly, if two circuits meet at a vertex, then the remaining circuits merge until they reach the end.
\begin{defn}\label{defn:GM-covering}
We say that
 a covering
 $\covrepa{G}{\fai}$
 is of the \textit{Gambaudo--Martens type} if for each $n > 0$, there exist
 a vertex $v_{n,0}$, a finite number of circuits
 $c_{n,i}~(1 \le i \le r_n)$, and a covering map $\fai_n$ such that
\enumb
\item $c_{n,i}$ can be
 written as
 $(v_{n,0}=v_{n,i,0},v_{n,i,1},v_{n,i,2},\dotsc,v_{n,i,l(n,i)} = v_{n,0})$
 with $l(n,i) \ge 1$,
\item $\bigcup_{i = 1}^{r_n}E(c_{n,i}) = E(G_n)$,
\item if $v_{n,i,j} = v_{n,i',j'}$ with $j,j' \ge 1$, then
 $v_{n,i,j+k} = v_{n,i',j'+k}$
 for $k = 0,1,2,\dotsc$, until $j+k = l(n,i)$ and $j'+k = l(n,i')$
 at the same time,
\item $\fai_n(v_{n,0}) = v_{n-1,0}$ for all $n \bpi$, and
\item\label{item:pdirectional}
 $\fai_n(v_{n,i,1}) = v_{n-1,1,1}$ for all $n \bpi$ and $1 \le i \le r_n$.
\enumn
We say that a covering of this type is a \textit{GM-covering} for short.
We denote $\sC_n := \sC(G_n) = \seb c_{n,i} \mid 1 \le i \le r_n \sen$.
A GM-covering is said to be {\it simple} if
 for all $n > 0$, there exists an $m > n$ such that
 for each $1 \le i \le r_m$, $E(\fai_{m,n}(c_{m,i})) = E(G_n)$.
By \cref{prop:minimal},
 this condition makes the resulting zero-dimensional system minimal.
If we want to avoid the case in which the resulting zero-dimensional system
has an isolated point, we have to add the following condition:
 for every $n \ge 1$ and every vertex $v$ of $G_n$,
 there exist an $m > n$ and distinct vertices $u_1,u_2$ of $G_m$ such that
 $\fai_{m,n}(u_1) = \fai_{m,n}(u_2) =v$.
The \textit{rank} of a GM-covering is
 the integer $1 \le K \le \infty$ defined by $K := \liminf_{n \to \infty}r_n$.
\end{defn}
\begin{rem}
For $n \ge 0$ and $1 \le i \le r_n$, we can write
\[\fai_n(c_{n,i})
 = c_{n-1,a(n,i,1)}c_{n-1,a(n,i,2)}\dotsb c_{n-1,a(n,i,k(n,i))},\]
such that $a(n,i,1) = 1$ for all $i$ with $1 \le i \le r_n$.

\end{rem}

%
%
\begin{nota}\label{nota:GMstep1}
By telescoping,
 we can add the following condition to a simple GM-covering:
for every $n \bpi$ and every  $i~(1 \le i \le r_n)$,
 $E(\fai_n(c_{n,i})) = E(G_{n-1})$.
Hereafter,
 if we say that a GM-covering is simple, we assume that this condition
 is satisfied.
\end{nota}
\begin{thm}[Gambaudo and Martens, \cite{Gambaudo_2006AlgTopMinCantSets}]
A zero-dimensional system is minimal (not necessarily homeomorphic)
 if and only if it is represented as the inverse limit of a simple GM-covering.
\end{thm}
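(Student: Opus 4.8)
The plan is to treat the two implications separately. The ``if'' direction is essentially recorded already in \cref{defn:GM-covering}: if $(X,f)$ is the inverse limit of a simple GM-covering $\covrepa{G}{\fai}$, then for each $n\ge 0$ simplicity gives an $m>n$ with $E(\fai_{m,n}(c_{m,i}))=E(G_n)$ for every $i$; since $\sC(G_m)=\seb c_{m,i}\sen$ and every vertex of $G_n$ meets an edge, we obtain $V(\fai_{m,n}(c))=V(G_n)$ for all $c\in\sC(G_m)$, and \cref{prop:minimal} yields minimality. The substance is the ``only if'' direction.

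So assume $(X,f)$ is minimal. If $X$ is finite it is a single periodic orbit, for which a GM-covering is written down by hand (one circuit per level, identity bonding maps), so assume $X$ is infinite; then $f$ has no periodic points. Fix $x_0\in X$ and a decreasing sequence of clopen neighborhoods $A_1\supseteq A_2\supseteq\dotsb$ of $x_0$ with $\bigcap_n A_n=\seb x_0\sen$, chosen --- using continuity of $f$, aperiodicity of $x_0$, and minimality --- small enough that $f(A_n)\cap A_{n-1}=\kuu$ and that the least first-return time to $A_n$ tends to $\infty$. For each $n$ also fix a clopen partition $\Pcal^{(0)}_n$ of $X$ having $A_n$ as a member, refining the partition $\Pcal_{n-1}$ defined below, and with $\mesh(\Pcal^{(0)}_n)<1/n$. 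Now define $\Pcal_n$ by declaring $A_n$ one cell and putting two points $y,y'\in X\setminus A_n$ in the same cell exactly when they share the same first-hitting time $\tau$ of $A_n$ and $f^i(y),f^i(y')$ lie in a common $\Pcal^{(0)}_n$-cell for all $0\le i<\tau$. By minimality $\tau$ is bounded, so $\Pcal_n$ is a finite clopen partition refining $\Pcal^{(0)}_n$, hence $\Pcal_{n-1}$. Let $G_n$ have the cells of $\Pcal_n$ as vertices, with an edge $P\to P'$ precisely when $f(P)\cap P'\nekuu$, and let $\fai_n$ send each cell of $\Pcal_n$ to the cell of $\Pcal_{n-1}$ containing it.

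The crux is that off $A_n$ the dynamics is deterministic: for $P\ne A_n$ the data defining $P$ also determine the single cell containing $f(P)$, so $P$ has out-degree one. Hence each $\fai_n$ is \pdirectional\ --- at the vertex $A_n$ one uses $f(A_n)\cap A_{n-1}=\kuu$ to see that all cells meeting $f(A_n)$ lie in one $\Pcal_{n-1}$-cell --- and edge-surjectivity is routine, so $\covrepa{G}{\fai}$ is a covering whose inverse limit is $(X,f)$ by \cref{thm:0dim=covering}, since $\mesh(\Pcal_n)\to 0$. Put $v_{n,0}:=A_n$. Out-degree one off $v_{n,0}$ forces every circuit of $G_n$ to meet $v_{n,0}$ exactly once, so circuits correspond to the edges leaving $v_{n,0}$; determinism off $v_{n,0}$ is precisely the merging condition~(c) of \cref{defn:GM-covering}; and $f(A_n)\cap A_{n-1}=\kuu$ gives condition~(e) once we label $c_{n,1}$ the circuit through the cell containing $f(x_0)$, while (a), (b), (d) are immediate. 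Simplicity follows by combining the growth of return times with uniform recurrence (a sufficiently long orbit segment traverses every circuit of $G_n$); and when $X$ has no isolated point, $\mesh(\Pcal_m)\to 0$ eventually splits every cell, furnishing the extra non-isolated-point clause.

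The main obstacle is the construction of the partitions $\Pcal_n$ --- more precisely, the observation that refining by ``the orbit until its first return to $A_n$'' simultaneously keeps the partition finite (via minimality), renders $f$ deterministic off $A_n$ (which is what forces the GM-tower shape), and stays compatible with refining the previous level. Everything afterwards --- verifying conditions (a)--(e), simplicity, and that the inverse limit is the original system --- is bookkeeping; non-injectivity of $f$ only causes distinct towers to share floors, and this is absorbed by the merging condition~(c) rather than being an obstruction. (One could instead start, via \cref{thm:0dim=covering}, from an arbitrary covering of $(X,f)$ and telescope it into GM form, but the direct partition construction seems cleanest.)
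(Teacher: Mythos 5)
The paper does not actually prove this statement: its ``proof'' is the single line ``See [GM06, Theorem 2.5]'', so your self-contained argument is necessarily a different route from the paper's. What you write is essentially the standard Kakutani--Rokhlin first-return-tower construction adapted to continuous surjections: the first-return partition over a shrinking clopen base $A_n$ produces graphs in which every vertex other than $A_n$ has out-degree one, and that single observation simultaneously yields \pdirectionality\ (off the base), the merging condition (c), and the identification of the circuits with the edges leaving $v_{n,0}=A_n$. Your ``if'' direction coincides with the remark already made inside \cref{defn:GM-covering} (simplicity plus \cref{prop:minimal}). For the ``only if'' direction the outline is sound, including the two places where minimality is genuinely used: every cell lies on a circuit through $A_n$ (density of $\bigcup_{k}f^k(A_n)$, which is what condition (b) really requires), and simplicity follows from uniform recurrence combined with return times to $A_m$ tending to infinity. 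Your construction buys a proof visibly parallel to the tower/array formalism of \cref{sec:gambaudo-martens}, at the cost of length; the paper buys brevity by outsourcing everything to Gambaudo--Martens.

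One step needs repair. You assert that $f(A_n)\cap A_{n-1}=\kuu$ already forces all $\Pcal_n$-cells meeting $f(A_n)$ to lie in a single $\Pcal_{n-1}$-cell; it does not, since $f(A_n)$ could straddle several $\Pcal_{n-1}$-cells each disjoint from $A_{n-1}$, and without the single-cell property both \pdirectionality\ at the vertex $A_n$ and condition (e) fail. The fix lives entirely inside your own setup: $\Pcal_{n-1}$ is already constructed when $A_n$ is chosen, its cell containing $f(x_0)$ is clopen, and $f(x_0)\ne x_0$, so by continuity you may shrink $A_n$ until $f(A_n)$ is contained in that one cell (this also gives $f(A_n)\cap A_{n-1}=\kuu$ for free, provided each $A_m$ was chosen to avoid $f(x_0)$). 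State this stronger requirement on $A_n$ explicitly; with it, labelling $c_{n-1,1}$ as the circuit whose first vertex is the cell of $f(x_0)$ makes condition (e) hold exactly as you intend.
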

\begin{proof}
See \cite[Theorem 2.5]{Gambaudo_2006AlgTopMinCantSets}.
\end{proof}
As an analogue of topological rank for Cantor minimal homeomorphisms,
 we say that a minimal zero-dimensional system has
 \textit{topological rank} $K$
 if there exists a simple GM-covering of rank $K$,
 and $K$ is the minimum of such numbers
 (see \cite{DOWNAROWICZ_2008FiniteRankBratteliVershikDiagAreExpansive}).
In \cite{DOWNAROWICZ_2008FiniteRankBratteliVershikDiagAreExpansive},
 it was shown that a Cantor minimal homeomorphism with finite topological
 rank $K > 1$ is expansive,
 i.e., topologically conjugate to a minimal two-sided subshift.
%
The remainder of this section is devoted to preparing the statement of
 our main result and its proof.
Suppose that a simple GM-covering
 $\covrepa{G}{\fai}$
 produces a minimal zero-dimensional system $G_{\infty}$.
We write $G_{\infty}  = (X,f)$.
We assume that $(X,f)$ is not a single periodic orbit.
Then, because of minimality, $(X,f)$ is a Cantor system
 and has no periodic orbits.
Therefore, the minimal length of the circuits of $G_n$ becomes infinity, i.e.,
 we get $l(n,i) \to \infty$ uniformly as $n \to \infty$.
\begin{nota}\label{nota:natural-extension}
For $(X,f)$, we construct the natural extension $(\hX_f,\sigma)$ as follows:
\itemb
\item $\hX_f :=
 \seb (\dotsc,x_{-1},x_0,x_{1},x_{2}, \dotsc) \in X^{\Z}
 \mid f(x_{i}) = x_{i+1} \ \ \myforall\  i \bi \sen$;
\item for $\hx = (\dotsc,x_{-1},x_0,x_{1},x_{2}, \dotsc) \in \hX_f$,
 $\sigma$ shifts $\hx$ to the left,
 i.e., $(\sigma(\hx))_i = x_{i+1}$ for all $i \bi$.
\itemn
\end{nota}
It is easy to check that if $(X,f)$ is minimal,
 then $(\hX_f,\sigma)$ is minimal.
For an $\hx \in \hX_f$ and an $i \bi$, we denote $\hx(i) := x_i$.
Then, $(\sigma(\hx))(i) = \hx(i+1)$ for all $i \bi$.
We use many notations and concepts from
 \cite{DOWNAROWICZ_2008FiniteRankBratteliVershikDiagAreExpansive}.
For every $\hx \in \hX_f$ and $i \bi$,
 there exists a unique $u_{n,i} \in V(G_n)$ such that $x_i \in U(u_{n,i})$.
Therefore, for each $n \bni$, a unique sequence
 $\hx|_{n} = (\dotsc,u_{n,-2},u_{n,-1},u_{n,0},u_{n,1},\dotsc)$
 of vertices of $G_n$ is defined such that
 $x_i \in U(u_{n,i})$ for all $i \bi$.
We write $\hx|_{n}(i)  := u_{n,i}$ for all $i \bi$.
Although the vertex $u_{n,i}$ is uniquely determined for each $\hx$,
 $n \ge 0$ and $i \bi$,
 the circuit $c_{n,t}$ with $u_{n,i} \in V(c_{n,t})$ may not be unique.
Nevertheless, if $x_i \in U(v_{n,0})$ for some $i \bi$,
 then there exists a unique $t~(1 \le t \le r_n)$ such that
 $x_{i+1} \in U(v_{n,t,1})$;
 therefore, $x_{i+j} \in U(v_{n,t,j})$ for all $0 \le j \le l(n,t)$.
%
%
\begin{figure}
\begin{center}\leavevmode 
\xy
(0,18)*{}; (100,18)*{} **@{-},
 (48,16)*{c_{n,1}
 \hspace{6mm} c_{n,3} \hspace{6mm}
 \hspace{12mm} c_{n,1} \hspace{8mm}
 \hspace{5mm} c_{n,3} \hspace{5mm}
 \hspace{6mm} c_{n,2} \hspace{6mm}
 \hspace{4mm} c_{n,1}
 \hspace{2mm} },
(7,18)*{}; (7,14)*{} **@{-},
(28,18)*{}; (28,14)*{} **@{-},
(49,18)*{}; (49,14)*{} **@{-},
(70,18)*{}; (70,14)*{} **@{-},
(84,18)*{}; (84,14)*{} **@{-},
(0,14)*{}; (100,14)*{} **@{-},
 (55,12)*{c_{n+1,5}
 \hspace{36mm} c_{n+1,1} \hspace{6mm}
 \hspace{20mm} c_{n+1,3} \hspace{8mm} },
(28,14)*{}; (28,10)*{} **@{-},
(84,14)*{}; (84,10)*{} **@{-},
(0,10)*{}; (100,10)*{} **@{-},
\endxy
\end{center}
\caption{$n$th and $(n+1)$th rows
 of a linked array system with cuts.}\label{fig:array-system}
\end{figure}
\begin{nota}
We write this $t$ as $t(\hx,n,i)$, and $c_{n,t}$ as $c(\hx,n,i)$,
 for all $n \ge 0$ and $i \bi$.
\end{nota}
Let $k(0) \bi$ such that $x_{k(0)} \in U(v_{n,0})$, and
 let $k(1) > k(0)$ be the least $k > k(0)$ such that $x_k \in U(v_{n,0})$.
Then,
 we combine the interval $u_{n,k(0)},u_{n,k(0)+1},\dotsc,u_{n,k(1)-1}$
 with the unique circuit $c(\hx,n,i)$ with $k(0) \le i < k(1)$.
Thus, we obtain a sequence of $c_{n,i}$s, and we denote it as $\hx[n]$.
We write $\hx[n](i) = c(\hx,n,i)$ for all $n \ge 0$ and $i \bi$.
To mark the beginning of a circuit, it is sufficient
 to change $c(\hx,n,i)$ to $\check{c}(\hx,n,i)$ for each $i \bi$ with $u_{n,i} = v_{n,0}$.
Nevertheless,
 as in \cite{DOWNAROWICZ_2008FiniteRankBratteliVershikDiagAreExpansive},
 for each sequence $\hx[n]$ of circuits of $G_n$ $(n > 0)$,
 instead of changing the symbol,
 we make an \textit{$n$-cut at position $i \bi$}
 just before $i \bi$ with $\hx|_n(i) = v_{n,0}$, i.e.,
 if there exists an $n$-cut at position $i \bi$,
 then $c(\hx,n,i-1)$ and $c(\hx,n,i)$ are separated by the cut
 (see \cref{fig:array-system}).
Note that we can recover the sequence of
 vertices of $G_n$ from $\hx[n]$.
An $\hx[0]$ becomes just a sequence of $e_0 := (v_0,v_0)$.
%
%
For an interval $[n,m]$ with $m > n$, the combination of rows $\hx|_{n'}$
 with $n \le n' \le m$ is denoted as $\hx|_{[n,m]}$, and
 the combination of rows $\hx[n']$
 with $n \le n' \le m$ is denoted as $\hx[n,m]$.
The \textit{array system} of $\hx$ is the infinite combination
 $\hx|_{[0,\infty)}$ of all rows $\hx|_n$ $0 \le n < \infty$.
The \textit{linked array system} of $\hx$ is the infinite combination
 $\hx[0,\infty)$ of all rows $\hx[n]$ ($0 \le n < \infty$)
 (see \cref{fig:array-system-2}).
Note that from the information of $\hx[0,\infty)$,
 we can recover $\hx|_{[0,\infty)}$
 and also identify $\hx$ itself.
If each circuit of $G_n$ is considered to be just an alphabet,
 then for $n \ge 0$ and $I < J$,
 we can consider a finite sequence of circuits of $G_n$,
\[\hx[n](I),\hx[n](I+1), \dotsc, \hx[n](J),\]
even if the completion of the circuits is cut off
 at the right or left end in the above sequence.
\begin{defn}\label{defn:array-systems}
Let $\ddX_f := \seb \hx|_{[0,\infty)} \mid \hx \in \hX_f \sen$
 be a set of sequences of
 symbols that are vertices of $G_n$ $(0 \le n < \infty)$.
The topology is generated by \textit{cylinders}
 such that for $\hx \in \hX_f$ and $N,I > 0$,
\[\ddC(\hx,N,I) :=
 \seb \hy|_{[0,\infty)} \mid \hy \in \hX_f,\
 \hy|_{[0,N]}(i) = \hx|_{[0,N]}(i) \myforall
 i \mywith -I \le i \le I \sen.\]
The shift map $\sigma : \ddX_f \to \ddX_f$ is defined as above.
Then, $(\ddX_f,\sigma)$ is a zero-dimensional system, and we refer to it
 as an \textit{array system} of $(\hX_f,\sigma)$.
Let $\barX_f := \seb \hx[0,\infty) \mid \hx \in \hX_f \sen$
 be a set of sequences of
 symbols that are circuits of $G_n$ $(0 \le n < \infty)$.
The topology is generated by \textit{cylinders}
 such that for $\hx \in \hX_f$ and $N,I > 0$,
\[\barC(\hx,N,I) :=
 \seb \hy[0,\infty) \mid \hy \in \hX_f,~\hy[0,N](i) = \hx[0,N](i) \myforall
 i \mywith -I \le i \le I \sen.\]
The shift map $\sigma : \barX_f \to \barX_f$ is defined as above.
Then, $(\barX_f,\sigma)$ is a zero-dimensional system, and we refer to it
 as a {\it linked array system} of $(\hX_f,\sigma)$.
\end{defn}

\begin{rem}
Clearly, $(\ddX_f,\sigma)$ is topologically conjugate to $(\hX_f,\sigma)$.
Moreover,
 because $(\ddX_f,\sigma)$ has a continuous factor map to $(\barX_f,\sigma)$
 and it is bijective as described above,
 $(\barX_f,\sigma)$ is also topologically
 conjugate to $(\hX_f,\sigma)$.
\end{rem}
%
%
%
%
The row $\hx[n]$ is precisely separated into circuits by the cuts.
Note that for $m > n$, if there exists an $m$-cut at position $k$, then
 there exists an $n$-cut at position $k$.
%
%
%
%
\begin{figure}
\begin{center}\leavevmode 
\xy
(0,22)*{}; (100,22)*{} **@{-},
 (50,20)*{v_0\phantom{{}_{{},f}}\ v_0\phantom{{}_{{},l}}
\ v_0\phantom{{}_{{},l}}\ v_0\phantom{{}_{{},l}}
\ v_0\phantom{{}_{{},f}}\ v_0\phantom{{}_{{},l}}
\ v_0\phantom{{}_{{},l}}\ v_0\phantom{{}_{{},l}}
\ v_0\phantom{{}_{{},f}}\ v_0\phantom{{}_{{},l}}
\ v_0\phantom{{}_{{},l}}\ v_0\phantom{{}_{{},l}}
\ v_0\phantom{{}_{{},f}}\ v_0\phantom{{}_{{},l}}},
(0,22)*{}; (0,18)*{} **@{-},
(7,22)*{}; (7,18)*{} **@{-},
(14,22)*{}; (14,18)*{} **@{-},
(21,22)*{}; (21,18)*{} **@{-},
(28,22)*{}; (28,18)*{} **@{-},
(35,22)*{}; (35,18)*{} **@{-},
(42,22)*{}; (42,18)*{} **@{-},
(49,22)*{}; (49,18)*{} **@{-},
(56,22)*{}; (56,18)*{} **@{-},
(63,22)*{}; (63,18)*{} **@{-},
(70,22)*{}; (70,18)*{} **@{-},
(77,22)*{}; (77,18)*{} **@{-},
(84,22)*{}; (84,18)*{} **@{-},
(91,22)*{}; (91,18)*{} **@{-},
(98,22)*{}; (98,18)*{} **@{-},
(0,18)*{}; (100,18)*{} **@{-},
 (49,16)*{c_{1,3}
 \hspace{8mm} c_{1,3} \hspace{6mm}
 \hspace{11mm} c_{1,1} \hspace{8mm}
 \hspace{7mm} c_{1,3} \hspace{5mm}
 \hspace{6mm} c_{1,2} \hspace{6mm}
 \hspace{4mm} c_{1,1}
 \hspace{2mm} },
(7,18)*{}; (7,14)*{} **@{-},
(28,18)*{}; (28,14)*{} **@{-},
(49,18)*{}; (49,14)*{} **@{-},
(70,18)*{}; (70,14)*{} **@{-},
(84,18)*{}; (84,14)*{} **@{-},
(0,14)*{}; (100,14)*{} **@{-},
(56,12)*{c_{2,3} \hspace{0.7mm}\
 \hspace{32mm} c_{2,1} \hspace{27mm}
 \hspace{10mm} c_{2,3}
 \hspace{2mm} },
(28,14)*{}; (28,10)*{} **@{-},
(84,14)*{}; (84,10)*{} **@{-},
(0,10)*{}; (100,10)*{} **@{-},
(36,8)*{c_{3,3} \hspace{26mm}
 \hspace{15mm} c_{3,1}},
(28,10)*{}; (28,6)*{} **@{-},
(0,6)*{}; (100,6)*{} **@{-},
(50,4)*{\vdots},
\endxy
\end{center}
\caption{The first 4 rows of a linked array system.}\label{fig:array-system-2}
\end{figure}
%
%
%
%
%
For each circuit $c_{n,i}$, we can determine a series of circuits
 by
 $\fai_n(c_{n,i}) = c_{n-1,1}c_{n-1,a(n,i,2)}\dotsb c_{n-1,a(n,i,k(n,i))}$.
Furthermore,
 each $c_{n-1,a(n,i,j)}$ determines a series of circuits by the map
 $\fai_{n-1}$.
Thus, we can determine a set of circuits arranged in a square form as in
 \cref{2-symbol}.
Following \cite{DOWNAROWICZ_2008FiniteRankBratteliVershikDiagAreExpansive},
 this form is said to be the {\it $n$-symbol} and denoted by $c_{n,i}$.
For $m < n$,
 the projection $c_{n,i}[m]$ that is a finite sequence of circuits of $G_m$
 is also defined.
\begin{figure}
\begin{center}\leavevmode 
\xy
(28,32)*{}; (84,32)*{} **@{-},
(57,30)*{v_0\phantom{{}_{{},l}}\ v_0\phantom{{}_{{},l}}
\ v_0\phantom{{}_{{},l}}\ v_0\phantom{{}_{{},l}}
\ v_0\phantom{{}_{{},f}}\ v_0\phantom{{}_{{},l}}
\ v_0\phantom{{}_{{},l}}\ v_0\phantom{{}_{{},l}}},
(28,32)*{}; (28,28)*{} **@{-},
(35,32)*{}; (35,28)*{} **@{-},
(42,32)*{}; (42,28)*{} **@{-},
(49,32)*{}; (49,28)*{} **@{-},
(56,32)*{}; (56,28)*{} **@{-},
(63,32)*{}; (63,28)*{} **@{-},
(70,32)*{}; (70,28)*{} **@{-},
(77,32)*{}; (77,28)*{} **@{-},
(84,32)*{}; (84,28)*{} **@{-},
(28,28)*{}; (84,28)*{} **@{-},
 (60,26)*{
 \hspace{7mm} c_{1,1} \hspace{8mm}
 \hspace{7mm} c_{1,3} \hspace{6mm}
 \hspace{6mm} c_{1,2} \hspace{8mm}
 \hspace{2mm} },
(28,28)*{}; (28,24)*{} **@{-},
(49,28)*{}; (49,24)*{} **@{-},
(70,28)*{}; (70,24)*{} **@{-},
(84,28)*{}; (84,24)*{} **@{-},
(28,24)*{}; (84,24)*{} **@{-},
(60,22)*{
 \hspace{23mm} c_{2,1} \hspace{23mm}
 \hspace{2mm} },
(28,24)*{}; (28,20)*{} **@{-},
(84,24)*{}; (84,20)*{} **@{-},
(28,20)*{}; (84,20)*{} **@{-},
\endxy
\end{center}
\caption{The $2$-symbol corresponding to the circuit $c_{2,1}$
 of Figure \ref{fig:array-system-2}.}\label{2-symbol}
\end{figure}
The set $X_n := \seb \hx[n] \mid \hx \in \hX_f \sen$ is a two-sided subshift
 of the finite set
 $\sC_n \cup
 \seb \check{c}_{n,1}, \check{c}_{n,2}, \dotsc, \check{c}_{n,r_n} \sen$.
The factoring map is denoted by $\pi_n : \hX_f \to X_n$,
 and the shift map is denoted by $\sigma_n : X_n \to X_n$.
We simply write $\sigma = \sigma_n$ for all $n$
 if there is no confusion.
%

%
%
Next, we wish to briefly recall the construction of the array system
 in \cite{DOWNAROWICZ_2008FiniteRankBratteliVershikDiagAreExpansive}.
Let $(V,E,\ge)$ be a properly ordered Bratteli diagram
 with the Vershik map
 $\phi : E_{0,\infty} \to E_{0,\infty}$.
Let $x \in E_{0,\infty}$.
We write $\phi^i(x) = (e_{1,i},e_{2,i},\dotsc)$ for all $i \bi$.
Then, we can construct a sequence $v_{n,i} = s(e_{n+1,i})$ for
 all $n \bni$ and $i \bi$.
For each $n \bni$, we denote $v_{x}[n] := (v_{n,i})_{i \bi}$ and
 the combination of these lines as $v_x := v_x[0,\infty)$.
For each $n \bni$, we make an $n$-cut by the following argument.
For each $v \in V_n$, we define
 $P_v := \seb (e_1,e_2,\dotsc,e_{n-1}) \mid
 r(e_{n-1}) = v \sen$.
We make an $n$-cut just before
 $(e_1,e_2,\dotsc,e_{n-1},\dotsc) \in E_{0,\infty}$
 such that $(e_1,e_2,\dotsc,e_{n-1}) \in P_v$ is minimal.
Thus, as in the case of the previous argument of a GM-covering,
 $v_x[n]$ gets $n$-cuts.
We define $Y(V,E,\ge):= \seb v_x \mid x \in E_{0,\infty} \sen$.
Let $Y = Y(V,E,\ge)$
and let $\sigma : Y \to Y$ be the left shift.
Then,
 $(E_{0,\infty},\phi)$ is naturally topologically conjugate
 to $(Y,\sigma)$.
%
%
%
%

%
%
%
%
%
%
\section{Main Theorem.}
In this section, we state our main result and prove the theorem.
\begin{thm}[Main Result]
Let $(X,f)$ be a minimal (not necessarily homeomorphic)
 zero-dimensional system with topological rank $K \ge 1$.
Then, its natural extension $(\hX_f,\sigma)$ has topological rank $\le K$.
\end{thm}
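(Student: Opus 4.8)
The plan is to exhibit a simple GM-covering of $(\hX_f,\sigma)$ of rank $\le K$; since, as recalled in \cref{sec:gambaudo-martens}, the topological rank of a minimal zero-dimensional system is by definition the least rank of a simple GM-covering representing it, this yields the theorem. If $K=\infty$ there is nothing to prove, so assume $K<\infty$ and fix a simple GM-covering $\covrepa{G}{\fai}$ of $(X,f)$ of rank $K$. Since $\liminf_n r_n=K$, telescoping onto the levels $n$ with $r_n=K$ leaves a simple GM-covering in which $r_n=K$ for every $n\bpi$; one uses here that telescoping preserves the defining conditions of \cref{defn:GM-covering} (for instance $\fai_{m,n}(v_{m,i,1})=v_{n,1,1}$ follows by iterating \cref{item:pdirectional}) and that, by \cref{nota:GMstep1}, we may also assume $E(\fai_n(c_{n,i}))=E(G_{n-1})$ for all $n\bpi$ and $1\le i\le K$. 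Finally, by the remark after \cref{defn:array-systems}, $(\hX_f,\sigma)$ is topologically conjugate to the linked array system $(\barX_f,\sigma)$, with which I would work.

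The naive attempt is to push the circuit structure of $(X,f)$ forward to $\hX_f$: for each $n$ put $\hB_n:=\seb\hx\in\hX_f\mid\hx(0)\in U(v_{n,0})\sen$ and, for $1\le i\le K$, $\hB_{n,i}:=\seb\hx\in\hB_n\mid\hx(1)\in U(v_{n,i,1})\sen$. Minimality of $(\hX_f,\sigma)$ makes every $\sigma$-orbit meet $\hB_n$ infinitely often in both directions, so $\Pcal_n:=\seb\sigma^j\hB_{n,i}\mid 1\le i\le K,\ 0\le j<l(n,i)\sen$ is a clopen Kakutani--Rohlin partition with exactly $K$ towers; the circuit decomposition $\fai_n(c_{n,i})=c_{n-1,1}c_{n-1,a(n,i,2)}\dotsb$ gives $\Pcal_{n+1}\refine\Pcal_n$, and simplicity makes the return maps minimal, so this produces a GM-covering of rank $K$. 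It does \emph{not}, however, represent $(\hX_f,\sigma)$: one has $\bigcap_n\hB_n=\seb\hx\mid\hx(0)=x^*\sen$, where $x^*$ is the unique point of $\bigcap_nU(v_{n,0})$, and this set is the fibre of the factor map $\hX_f\to X$ over $x^*$, which in general is not a singleton. Since $\hB_{n,i}$ imposes no restriction on the negative coordinates, the clopen sets $\sigma^j\hB_{n,i}$ fail to separate the points of that fibre, so the inverse limit of the covering is only a (generally proper) factor of $(\hX_f,\sigma)$.

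So the covering $\baiGcal=\covrepa{\baiG}{\baifai}$ to be built must feed an unboundedly growing amount of \emph{backward} history into the vertices of $\baiG_n$ while keeping the number of circuits equal to $K$. The key is that, the system having no periodic orbit, $l(n,i)\to\infty$ uniformly in $i$: knowing, at level $n$, the current circuit $c_{n,t}$, the position of $0$ inside it, and the single immediately preceding circuit $c_{n,s}$ pins down $\hx|_n$ on a window about $0$ of left width $\ge\min_i l(n,i)$; and the only way its forward width can remain bounded as $n\to\infty$ is that $\hx(d)=x^*$ for some fixed $d$, in which case the forward tail of $\hx$ is anyway determined by $f$. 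Combining these, such enriched level-$n$ data, taken over all $n$, separates the points of $\hX_f$, so that the inverse limit of $\baiGcal$ would be $(\barX_f,\sigma)\cong(\hX_f,\sigma)$.

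The hard part — which I expect to be the technical heart of the proof — is to organise this enrichment so that each $\baiG_n$ is a genuine Gambaudo--Martens graph: a \emph{single} base vertex $\baiv_{n,0}$ with exactly the $K$ circuits that lift $c_{n,1},\dotsc,c_{n,K}$, and not a graph whose circuits are the return words between consecutive $n$-cuts carrying prescribed labels, of which there are far more than $K$. The direction I would pursue is to carry the ``previous circuit'' information only along a prefix of each circuit that is long compared with what separation requires but short compared with $l(n,i)$, to discard it at an interior vertex, and — after a further telescoping — to have this information recovered at the level below from the data already present, so that the $\baifai_n$ remain covers; here \cref{item:pdirectional}, and more generally the left-properness $a(n,i,1)=1$, should make the recovery canonical. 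It then remains to check that $\baiGcal$ is of Gambaudo--Martens type of rank $K$, that it is simple and has no isolated point — both inherited from $\covrepa{G}{\fai}$ via \cref{prop:minimal} and the last clause of \cref{defn:GM-covering} — and that its inverse limit is conjugate to $(\hX_f,\sigma)$, whence $(\hX_f,\sigma)$ has topological rank $\le K$.
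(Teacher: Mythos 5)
Your diagnosis of the naive approach is exactly right: pulling back the towers $\sigma^j\hB_{n,i}$ only constrains forward coordinates, so the resulting covering represents a proper factor of $(\hX_f,\sigma)$, not $(\hX_f,\sigma)$ itself. But from that point on the proposal stops being a proof. The entire construction of $\baiGcal$ --- the definition of the vertex sets, the covers $\baifai_n$, the verification of the Gambaudo--Martens axioms (a single base vertex, exactly $K$ circuits, the merging condition (c)), and the conjugacy of the inverse limit with $(\hX_f,\sigma)$ --- is replaced by ``the direction I would pursue.'' Worse, the one difficulty you yourself isolate as decisive (encoding unboundedly much backward history into the level-$n$ vertices \emph{without} multiplying the number of circuits through the base beyond $K$) is precisely the step that is left unresolved; carrying a ``previous circuit'' label along a prefix and discarding it at an interior vertex creates, on the face of it, up to $K$ distinct lifts of each $c_{n,i}$, and no mechanism is given for collapsing them back to $K$ circuits while keeping the $\baifai_n$ \pdirectional\ and edge-surjective. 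So there is a genuine gap: the technical heart of the argument is missing, not merely compressed.

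For comparison, the paper avoids vertex enrichment altogether and does not build a GM-covering of $\hX_f$ directly; it builds a properly ordered Bratteli diagram of rank $K$ whose Vershik map is conjugate to $(\hX_f,\sigma)$ (which suffices, since the natural extension is a homeomorphism and the two notions of rank agree there). Working in the linked array system $(\barX_f,\sigma)$ --- which is already two-sided, so the backward history you are trying to inject is present for free --- one first telescopes so that $\fai_n(c_{n,i})=c_{n-1,1}\,c_{n-1,a(n)}\,d_{n-1,i}$ with $a(n)=a(n,i,2)$ independent of $i$, and then \emph{slides} the $n$-th row by $S(n)=\sum_{m=2}^{n}l(c_{m-1,1})$. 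After the slide each $c_{n,i}$ decomposes one level down as $c_{n-1,a(n)}\,d_{n-1,i}\,c_{n-1,1}$: every circuit's image now \emph{begins} with the same circuit $c_{n-1,a(n)}$ and \emph{ends} with the same circuit $c_{n-1,1}$. Taking $V_n:=\sC_n$ (so the rank is $K$) and ordering the edges according to this decomposition makes all minimal edges come from $c_{n-1,a(n)}$ and all maximal edges from $c_{n-1,1}$, hence there is a unique minimal and a unique maximal path and the diagram is properly ordered; its Vershik map is the shift on the slid array system, conjugate to $(\hX_f,\sigma)$. If you want to salvage your route, this realignment is the idea you are missing: the point is not to enrich vertices with history but to re-cut the two-sided array so that the circuit decompositions become proper at both ends.
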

\begin{proof}
By the assumption, there exists a simple GM-covering
 $\Gcal : \covrepa{G}{\fai}$ of
 rank $K$ such that $G_{\infty}$ is topologically conjugate to $(X,f)$.
Therefore, we assume that $G_{\infty} = (X,f)$.
In \cref{sec:gambaudo-martens}, we defined a linked array system
 $(\barX_f,\sigma)$.
We have stated that
\[\fai_{n}(c_{n,i}) = c_{n-1,1}c_{n-1,a(n,i,2)}\dotsb c_{n-1,a(n,i,k(n,i))}
 \text{ for each } 1 \le i \le r_n.\]
By telescoping, we can assume that $k(n,i) > 2$ for all $1 \le i \le r_n$
 and $a(n,i,2)$ is independent of $i$.
We write $a(n) := a(n,i,2)$.
Thus, we can write
 $\fai_n(c_{n,i}) = c_{n-1,1} c_{n-1,a(n)} d_{n-1,i}$.
From here, we make another symbolic linked array system $(Y,\sigma)$.
Later, we check whether the symbolic linked array system is actually linked with
 a properly ordered Bratteli diagram.
To make another array system, let $s_{n} = l(c_{n-1,1})$ for all $n \ge 2$.
We denote $S(n) := \sum_{i = 2}^{n}s(i)$.
Let $\barx \in \barX_f$.
For each $n \bpi$, we make another sequence $\bary[n](i) := \barx[n](i+S(n))$,
 i.e., we make different slides for each line $\barx[n]$ ($n \bpi$).
Let $\bary_{\barx} := \bary[0,\infty)$.
The set $Y := \seb \bary_{\barx} \mid \barx \in \barX_f \sen$
 is a subspace of $\prod_{n \bpi}{\sC_n}^{\Z}$ with the product topology.
We denote the map $\phi : \barX_f \to Y$ by $\phi(\barx) = \bary_{\barx}$.
Evidently, $\phi$ is a bijection.
Obviously, $\phi$ is continuous and a homeomorphism.
Let $\sigma : Y \to Y$ be the left shift.
Then, it is easy to see that $\sigma \circ \phi = \phi \circ \sigma$.
Therefore, $(Y,\sigma)$ is topologically conjugate to $(\barX_f, \sigma)$.
It is easy to check that after the slides, the cuts do not have conflicts
 in different levels, i.e.,
 for $n > m \ge 0$, if an $n$-cut occurred at position $i$,
 then an $m$-cut has to occur at position $i$ (see \cref{fig:slide}).
For each $n \bpi$, originally,
 each $c_{n,i}$ is projected to $c_{n-1,1} c_{n-1,a(n)} d_{n-1,i}$
 by the graph map $\fai_{n}$.
After the slides, $c_{n,i}$ in the $n$th level is projected to
 $c_{n-1,a(n)} d_{n-1,i} c_{n-1,1}$ (see \cref{fig:slide}).
We write $\fai'_n(c_{n,i}) := c_{n-1,a(n)} d_{n-1,i} c_{n-1,1}$
 for each $1 \le i \le r_n$.
We now construct an ordered Bratteli diagram.
Let $V_0 := \seb v_0 \sen$ as usual.
For each $n \bpi$, let $V_n := \sC_n$.
We can write
 $\fai'_n(c_{n,i}) = c_{n-1,a(n)} c_{n-1,a(n,i,3)} c_{n-1,a(n,i,4)} \dotsb c_{n-1,a(n,i,k(n,i))} c_{n-1,1}$.
We make the minimal edge $e_{n,1}$ from $c_{n-1,a(n)}$ to $c_{n,i}$
 and all the rest in this order.
Thus, the minimal edge connects $c_{n-1,a(n)}$ to $c_{n,i}$
 regardless of $i$,
 and the maximal edge connects $c_{n-1,1}$ to $c_{n,i}$ regardless of $i$.
The ordered Bratteli diagram thus constructed is denoted as $(V',E',\ge')$.
Evidently, it has rank $K$.
It is easy to check that $(V',E',\ge')$ is properly ordered.
The simplicity follows from the simplicity of $\Gcal$.
The Bratteli--Vershik system thus constructed is identical to the symbolic
 system $(Y,\sigma)$.
This completes the proof.
%
%
\begin{figure}
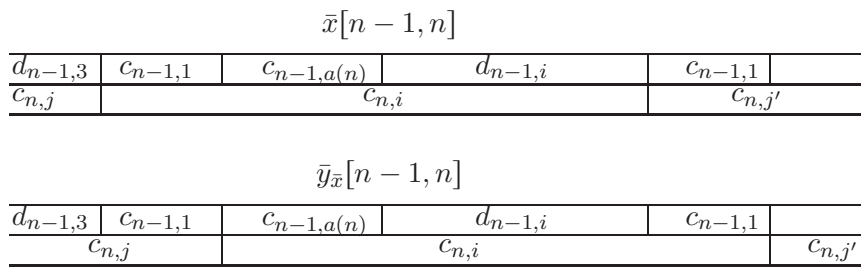

\begin{center}\leavevmode 
\xy
(50,42)*{\barx[n-1,n]},
(0,38)*{}; (112,38)*{} **@{-},
 (46,36)*{
 \hspace{13mm} d_{n-1,3}
 \hspace{4mm} c_{n-1,1} \hspace{8mm}
 \hspace{1mm} c_{n-1,a(n)} \hspace{12mm}
 \hspace{2mm} d_{n-1,i}  \hspace{16mm}
 \hspace{2mm} c_{n-1,1} \hspace{5mm}
 \hspace{1mm} },
(12,38)*{}; (12,34)*{} **@{-},
(28,38)*{}; (28,34)*{} **@{-},
(49,38)*{}; (49,34)*{} **@{-},
(84,38)*{}; (84,34)*{} **@{-},
(100,38)*{}; (100,34)*{} **@{-},
(0,34)*{}; (112,34)*{} **@{-},
(56,32)*{
 \hspace{2mm} c_{n,j} \hspace{7mm}\
 \hspace{32mm} c_{n,i} \hspace{35mm}
 \hspace{8mm} c_{n,j'}
 \hspace{12mm} },
(12,34)*{}; (12,30)*{} **@{-},
(84,34)*{}; (84,30)*{} **@{-},
(0,30)*{}; (112,30)*{} **@{-},
%
%
(50,22)*{\bary_{\barx}[n-1,n]},
(0,18)*{}; (112,18)*{} **@{-},
 (46,16)*{
 \hspace{13mm} d_{n-1,3}
 \hspace{4mm} c_{n-1,1} \hspace{8mm}
 \hspace{1mm} c_{n-1,a(n)} \hspace{12mm}
 \hspace{2mm} d_{n-1,i}  \hspace{16mm}
 \hspace{2mm} c_{n-1,1} \hspace{5mm}
 \hspace{1mm} },
(12,18)*{}; (12,14)*{} **@{-},
(28,18)*{}; (28,14)*{} **@{-},
(49,18)*{}; (49,14)*{} **@{-},
(84,18)*{}; (84,14)*{} **@{-},
(100,18)*{}; (100,14)*{} **@{-},
(0,14)*{}; (112,14)*{} **@{-},
(56,12)*{
 \hspace{12mm} c_{n,j} \hspace{7mm}\
 \hspace{32mm} c_{n,i} \hspace{35mm}
 \hspace{8mm} c_{n,j'}
 \hspace{2mm} },
(28,14)*{}; (28,10)*{} **@{-},
(100,14)*{}; (100,10)*{} **@{-},
(0,10)*{}; (112,10)*{} **@{-},
\endxy
\end{center}
\caption{In $[n-1,n]$ lines, $\barx$ is slid to $\bary_{\barx}$.}
\label{fig:slide}
\end{figure}
\end{proof}
Let $(X,f)$ be a Cantor minimal continuous surjection
 with topological rank $K > 1$.
By our result, it is not possible to conclude that
 the natural extension has topological rank $L > 1$.
Nevertheless, by \cite[Theorem 6.1]{SHIMOMURA_2015nonhomeomorphic},
 we can conclude that the natural extension is expansive.
Therefore, the natural extension is not an odometer.
Thus, by \cite[Theorem 6.1]{SHIMOMURA_2015nonhomeomorphic},
 we can conclude that the natural extension has topological rank $L > 1$.
Next, let $(\Sigma,\sigma)$ be a two-sided minimal subshift with
 finite topological rank
 $K > 1$.
By the \textit{one-sided factor}, we mean the one-sided minimal subshift
 $(\Sigma^+,\sigma)$ that is made by cutting off negative coordinates.
Then, the natural extension of $(\Sigma^+,\sigma)$ is canonically isomorphic
 to $(\Sigma,\sigma)$.
Thus, if $L$ is the topological rank of $(\Sigma^+,\sigma)$,
 then our main result concludes $L \ge K$.
By \cite[Theorem 6.8]{Shimomura_2016ZeroDimAlmostExtOdmeterGraphCov},
 we can find a two-sided subshift $(\Sigma',\sigma)$ that is topologically 
 conjugate to $(\Sigma,\sigma)$ such that the one-sided factor
 $({\Sigma'}^+,\sigma)$ has topological rank $K$.
Thus, we get the next corollary:
\begin{cor}
Let $(\Sigma,\sigma)$
 be a two-sided minimal subshift with topological rank $K$.
Then, the one-sided factor has topological rank $ \ge K$.
Furthermore,
 there exists a two-sided minimal subshift $(\Sigma',\sigma)$ such that
 $(\Sigma',\sigma)$ is topologically conjugate to $(\Sigma,\sigma)$ and
 the one-sided factor has topological rank $K$.
\end{cor}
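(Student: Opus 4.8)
The plan is to derive both assertions from our Main Theorem, together with \cite[Theorem 6.8]{Shimomura_2016ZeroDimAlmostExtOdmeterGraphCov}. First I would check that the \emph{one-sided factor} $(\Sigma^+,\sigma)$ — obtained by deleting the negative coordinates, with $\Sigma^+$ the set of one-sided sequences all of whose finite windows occur in $\Sigma$ — is a Cantor minimal continuous surjection: the shift on $\Sigma^+$ is continuous and surjective (a $\sigma$-preimage of $y\in\Sigma^+$ is obtained by extending $y$ to a two-sided point of $\Sigma$, shifting once, and restricting to the non-negative coordinates), and minimality of $(\Sigma,\sigma)$ passes to $(\Sigma^+,\sigma)$. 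Hence, by the Gambaudo--Martens theorem, $(\Sigma^+,\sigma)$ is represented by a simple GM-covering and has a well-defined topological rank; call it $L$ (a priori $1\le L\le\infty$).

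The crucial step is to identify the natural extension $(\widehat{\Sigma^+},\sigma)$ of $(\Sigma^+,\sigma)$ with $(\Sigma,\sigma)$. I would do this by the canonical map sending $\hx=(\dotsc,x_{-1},x_0,x_1,\dotsc)\in\widehat{\Sigma^+}$, with $\sigma(x_i)=x_{i+1}$, to the two-sided sequence $z$ defined by $z(n):=x_0(n)$ for $n\ge 0$ and $z(-k):=x_{-k}(0)$ for $k\ge 1$; one checks that $z\in\Sigma$ because every finite window of $z$ already occurs in some $x_{-k}\in\Sigma^+$, hence in $\Sigma$, and that $\hx\mapsto z$ is a homeomorphism commuting with $\sigma$ (its inverse sends $z$ to the orbit $(x_i)_{i\in\Z}$ with $x_i(n)=z(i+n)$). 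Since topological rank is invariant under topological conjugacy, $(\widehat{\Sigma^+},\sigma)$ has topological rank $K$. Now the Main Theorem, applied to $(\Sigma^+,\sigma)$ of rank $L$, yields that $(\widehat{\Sigma^+},\sigma)$ has rank $\le L$; therefore $K\le L$, which is the first assertion.

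For the second assertion I would simply invoke \cite[Theorem 6.8]{Shimomura_2016ZeroDimAlmostExtOdmeterGraphCov}, which furnishes a two-sided minimal subshift $(\Sigma',\sigma)$ topologically conjugate to $(\Sigma,\sigma)$ whose one-sided factor $({\Sigma'}^+,\sigma)$ has topological rank $K$. As $(\Sigma',\sigma)$ is conjugate to $(\Sigma,\sigma)$ it again has rank $K$, so the first assertion gives that $({\Sigma'}^+,\sigma)$ has rank $\ge K$ as well, hence exactly $K$; this $\Sigma'$ is the desired representative. The point of passing to $\Sigma'$ is that, unlike the conjugacy invariant $\Sigma$, the one-sided factor genuinely depends on the chosen presentation of the subshift.

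I do not expect any serious obstacle here: the only substantive external input is \cite[Theorem 6.8]{Shimomura_2016ZeroDimAlmostExtOdmeterGraphCov}, and everything else is bookkeeping. If one insisted on a self-contained argument, the real work would be in reproving that theorem — that is, exhibiting, within the conjugacy class of a finite topological rank two-sided subshift, a presentation whose coding blocks are aligned well enough (a matter of arranging the suffixes and prefixes of the blocks) that cutting off the negative coordinates does not increase the number of towers needed.
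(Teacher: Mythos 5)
Your proposal is correct and follows essentially the same route as the paper: identify the natural extension of the one-sided factor $(\Sigma^+,\sigma)$ canonically with $(\Sigma,\sigma)$, apply the Main Theorem to conclude $L\ge K$, and invoke \cite[Theorem 6.8]{Shimomura_2016ZeroDimAlmostExtOdmeterGraphCov} for the existence of $\Sigma'$. The additional details you supply (the explicit conjugacy between $\widehat{\Sigma^+}$ and $\Sigma$, and the minimality/surjectivity check for $\Sigma^+$) are exactly what the paper leaves implicit.
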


\vspace{2mm}

\noindent
\textsc{Acknowledgments:}
This work was partially supported by JSPS KAKENHI (Grant Number 16K05185).
%

\end{document}